\documentclass[letterpaper]{amsproc}
\usepackage{amsfonts}
\usepackage{amssymb}
\usepackage{amsmath}
\usepackage{amsthm}
\usepackage{color}
\usepackage{graphicx}
\usepackage{float}
\usepackage[colorlinks=true, citecolor=blue, urlcolor=cyan, linktocpage]{hyperref}
\usepackage{enumerate}

\usepackage{tikz}
\usetikzlibrary{arrows,matrix,positioning}
\usetikzlibrary{calc}
\usepackage{wrapfig}
\usepackage{verbatim}

\newtheorem{theorem}{Theorem}[section]

\newtheorem{conjecture}[theorem]{Conjecture}

\theoremstyle{definition}
\newtheorem{example}[theorem]{Example}
\theoremstyle{definition}

\theoremstyle{plain}

\theoremstyle{definition}

\numberwithin{figure}{section}
\numberwithin{theorem}{section}

\setlength{\textwidth}{28cc} \setlength{\textheight}{42cc}

\title[Generating B\'ezout trees for Pythagorean pairs]{GENERATING B\'EZOUT TREES FOR PYTHAGOREAN PAIRS}

\author[Emily Gullerud and James S. Walker]{\small EMILY GULLERUD AND JAMES S. WALKER}

\address{
Department of Mathematics \\
University of Wisconsin-Eau Claire}
\email{gullerej@uwec.edu}

\address{ 
Department of Mathematics \\
University of Wisconsin-Eau Claire}
\email{walkerjs@uwec.edu}

\begin{document}

\setcounter{page}{1} \thispagestyle{empty}

\begin{abstract}
Relatively prime pairs of integers can be represented as nodes in three way branching trees. We construct trees of B\'ezout coefficients which correspond to the relatively prime pairs in the aforementioned trees. As one application, we compare the B\'ezout coefficients in these trees to those returned by the \texttt{gcd} function in \textsc{Matlab}. As another application, we use these trees to decrease the computation time required to create computer generated hyperbolic wallpaper designs.
\end{abstract}

\maketitle

\section{Generating the trinary trees}

The pair of integers $(m,n)$ is defined to be a Pythagorean pair if they generate a Pythagorean triplet $(x,y,z)$ where $x^2+y^2=z^2$ for $x,y,z\in \mathbb{Z}$. These integers $m$ and $n$ are relatively prime and satisfy the equations $$x=m^2-n^2, \quad y=2mn, \quad z=m^2+n^2.$$ For example, the pair $(2,1)$ generates the Pythagorean triplet $(3,4,5)$. Not only does $(m,n)$ generate a Pythagorean triplet, but its \textit{associated pairs} $(n,m)$ and $(m,-n)$ generate the triplets $(-x,y,z)$ and $(x,-y,z)$ respectively, which clearly still satisfy the equation $x^2+y^2=z^2$.

Using the fact that $(2m+n,m)$ is a Pythagorean pair given that $(m,n)$ is a Pythagorean pair, Randall and Saunders \cite{Randall1994} proved that every relatively prime pair of integers can be obtained through two trinary trees, one starting with the pair $(2,1)$ and another with the pair $(3,1)$. These are three way branching trees since they use associated pairs to obtain subsequent branches. The construction of these trees is summarized below.

\begin{theorem}[Randall and Saunders, 1994]\label{thm:treenodes}
Let $(m,n)$ be a Pythagorean pair such that $m>n$ with associated pairs $(n,m)$ and $(m,-n)$. Define $f: \mathbb{Z} \times \mathbb{Z} \rightarrow \mathbb{Z} \times \mathbb{Z}$ by $f(m,n) = (2m+n,m)$. Then 
\begin{align*}
f(m,n) &= (2m+n,m),\\
f(n,m) &= (2n+m,n), \text{ and}\\
f(m,-n) &= (2m-n,m)
\end{align*}
are all Pythagorean pairs.
\end{theorem}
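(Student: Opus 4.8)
The plan is to reduce the theorem to three one-line divisibility computations, after first observing that the ``Pythagorean'' part of the conclusion comes for free. Indeed, for \emph{any} integer pair $(a,b)$ the identity $(a^2-b^2)^2+(2ab)^2=(a^2+b^2)^2$ holds, so a pair always generates a triplet with $x^2+y^2=z^2$; the substantive requirement for $(a,b)$ to be a Pythagorean pair is that $a$ and $b$ be relatively prime. Hence it suffices to show that each of the pairs $(2m+n,m)$, $(2n+m,n)$, and $(2m-n,m)$ consists of two coprime integers.

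First I would assemble the coprimality facts to be used. Since $(m,n)$ is a Pythagorean pair we have $\gcd(m,n)=1$, and because the gcd is symmetric and sign-insensitive we also get $\gcd(n,m)=1$ and $\gcd(m,-n)=1$; in particular the associated pairs $(n,m)$ and $(m,-n)$ are themselves Pythagorean pairs. The three displayed formulas for $f(m,n)$, $f(n,m)$, and $f(m,-n)$ are then just the definition $f(a,b)=(2a+b,\,a)$ evaluated at $(m,n)$, $(n,m)$, and $(m,-n)$, so nothing is needed there beyond substitution.

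Next comes the one substantive step, the coprimality, carried out identically in each case. If $d$ divides both $2m+n$ and $m$, then $d$ divides $(2m+n)-2m=n$, hence $d\mid\gcd(m,n)=1$, so $\gcd(2m+n,m)=1$. If $d$ divides both $2n+m$ and $n$, then $d$ divides $(2n+m)-2n=m$, hence $d\mid\gcd(n,m)=1$. If $d$ divides both $2m-n$ and $m$, then $d$ divides $2m-(2m-n)=n$, hence $d\mid\gcd(m,n)=1$. In each case the two entries are relatively prime, so all three pairs are Pythagorean pairs.

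I do not expect a genuine obstacle; the argument is elementary. The only thing that calls for a little care is the bookkeeping when $f$ is applied to an associated pair: one must remember that $f$ places $2a+b$ in the first slot and $a$ in the second, so in $f(n,m)=(2n+m,n)$ it is $n$ that reappears and the relevant coprimality to invoke is $\gcd(n,m)=1$ rather than $\gcd(m,n)=1$ --- equal, of course, but worth stating in the right form. The hypothesis $m>n$ is not actually needed for this particular argument; it serves the broader tree construction by ensuring each node lists its larger entry first.
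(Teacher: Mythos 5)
Your proof is correct: the reduction to coprimality (since $(a^2-b^2)^2+(2ab)^2=(a^2+b^2)^2$ holds for any integer pair, the substantive condition is that the two entries be relatively prime) and the three divisibility computations --- e.g.\ $d\mid 2m+n$ and $d\mid m$ force $d\mid n$, hence $d\mid\gcd(m,n)=1$ --- are all sound, and your remark that the hypothesis $m>n$ is not needed for this part is accurate. Note that the paper itself supplies no proof of Theorem~\ref{thm:treenodes}, deferring entirely to Randall and Saunders, so there is no in-paper argument to compare against; yours is the standard one and fills that gap cleanly.
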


Starting with the Pythagorean pair $(m,n)$, applying $f$ as defined in Theorem~\ref{thm:treenodes} to $(m,n)$ and its associated pairs $(n,m)$ and $(m,-n)$ and then successively applying $f$ to the resultant Pythagorean pairs results in a trinary tree. We denote this the \textbf{trinary tree generated by $\mathbf{(m,n)}$}; see Figure~\ref{fig:TrinaryTree}. Randall and Saunders proved that the trinary tree produced from $(3,1)$ contains all pairs of relatively prime odd integers. Similarly, the trinary tree produced from $(2,1)$ contains all pairs of relatively prime integers of opposite parity. Hence using only two trinary trees we can produce all the pairs of relatively prime integers. 

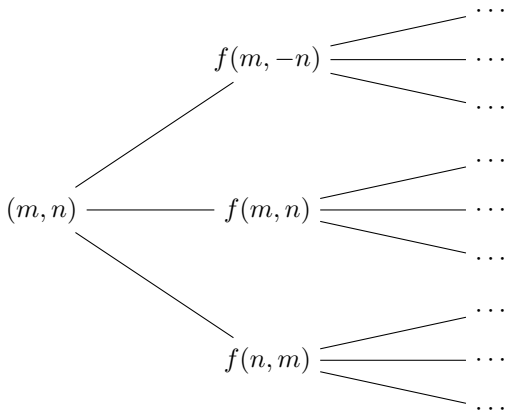
\begin{figure}[h]
\centering
\begin{tikzpicture}
\node at (-3,0) (A){$(m,n)$};
\node at (0,2) (B_1){$f(m,-n)$};
\node at (0,0) (B_2){$f(m,n)$};
\node at (0,-2) (B_3){$f(n,m)$};
\node at (3,2.65) (C_1){$\cdots$};
\node at (3,2) (C_2){$\cdots$};
\node at (3,1.35) (C_3){$\cdots$};
\node at (3,.65) (C_4){$\cdots$};
\node at (3,0) (C_5){$\cdots$};
\node at (3,-.65) (C_6){$\cdots$};
\node at (3,-1.35) (C_7){$\cdots$};
\node at (3,-2) (C_8){$\cdots$};
\node at (3,-2.65) (C_9){$\cdots$};
\draw (A) -- (B_1);
\draw (A) -- (B_2);
\draw (A) -- (B_3);
\draw (B_1) -- (C_1);
\draw (B_1) -- (C_2);
\draw (B_1) -- (C_3);
\draw (B_2) -- (C_4);
\draw (B_2) -- (C_5);
\draw (B_2) -- (C_6);
\draw (B_3) -- (C_7);
\draw (B_3) -- (C_8);
\draw (B_3) -- (C_9);
\end{tikzpicture}
\caption{The trinary tree generated by $(m,n)$.}
\label{fig:TrinaryTree}
\end{figure}

Since the Pythagorean pair $(m,n)$ is relatively prime, we can find a pair of integers $(u,v)$ such that $mu+nv=1$. We show that the pairs $(u,v)$ can be produced recursively in a similar fashion to the pairs $(m,n)$.

\begin{theorem}
Let $(m,n)$ be a Pythagorean pair such that $m>n$ with associated pairs $(n,m)$ and $(m,-n)$. Let $f$ be defined as in Theorem~\ref{thm:treenodes} and let $mu+nv=1$ for some $u,v \in \mathbb{Z}$. Define the function $g: \mathbb{Z} \times \mathbb{Z} \rightarrow \mathbb{Z} \times \mathbb{Z}$ by $g(u,v) = (v,u-2v)$. Then 
\begin{align*}
g(u,v) &= (v,u-2v),\\
g(v,u) &= (u,v-2u),\text{ and}\\
g(u,-v) &= (-v,u+2v)
\end{align*}
 yield the necessary $u'$ and $v'$ to satisfy 
\begin{align*}
(2m+n)u' + mv' &= 1,\\
(2n+m)u' + nv' &= 1,\text{ and}\\
(2m-n)u' + mv' &= 1
\end{align*}
respectively.
\end{theorem}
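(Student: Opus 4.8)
\emph{Proof proposal.} The plan is to verify the three claims by direct substitution, using nothing beyond the Bézout relation $mu+nv=1$. The idea that fixes the form of $g$ is that $g$ should do for Bézout coefficients exactly what $f$ does for Pythagorean pairs: the three children of $(m,n)$ are $f(m,n)$, $f(n,m)$, $f(m,-n)$, so applying $g$ to the Bézout pair attached to each of $(m,n)$, $(n,m)$, $(m,-n)$ ought to produce a Bézout pair for the corresponding child. So the first step is to pin down those input pairs: rewriting $mu+nv=1$ as $nv+mu=1$ shows $(v,u)$ is a Bézout pair for $(n,m)$, and rewriting it as $mu+(-n)(-v)=1$ shows $(u,-v)$ is a Bézout pair for $(m,-n)$. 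This explains why $g$ is applied to $(u,v)$, $(v,u)$, and $(u,-v)$ in the three cases.

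Next I would carry out the three one-line computations. For the first, set $g(u,v)=(v,\,u-2v)=:(u',v')$; then
\[
(2m+n)u'+mv' = (2m+n)v + m(u-2v) = 2mv+nv+mu-2mv = mu+nv = 1.
\]
For the second, set $g(v,u)=(u,\,v-2u)=:(u',v')$; then
\[
(2n+m)u'+nv' = (2n+m)u + n(v-2u) = 2nu+mu+nv-2nu = mu+nv = 1.
\]
For the third, set $g(u,-v)=(-v,\,u+2v)=:(u',v')$; then
\[
(2m-n)u'+mv' = (2m-n)(-v) + m(u+2v) = -2mv+nv+mu+2mv = mu+nv = 1.
\]
In each case the cross terms cancel and the expression collapses to $mu+nv=1$, which is the hypothesis; this proves the three assertions.

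I do not expect a genuine obstacle here: the mathematical content is entirely bookkeeping. The one point that needs care — and the thing that actually forces the definition $g(u,v)=(v,u-2v)$ — is correctly matching each associated pair of $(m,n)$ with the right ordering and sign of its Bézout coefficients \emph{before} applying $g$; once that matching is spelled out as above, the three verifications are immediate and structurally identical. If one wanted to say more, one could observe that $g$ is invertible (so the tree of Bézout pairs is in bijection with the trinary tree of Theorem~\ref{thm:treenodes}), but that is not required for the statement as given.
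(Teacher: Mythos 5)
Your proposal is correct, and it is worth noting that it runs in the opposite direction from the paper's argument. The paper starts from the existence of \emph{some} Bézout pair $(u',v')$ for the child (guaranteed since the child is again relatively prime), regroups $(2m+n)u'+mv'$ as $(2u'+v')m+u'n$, and then matches coefficients against $mu+nv=1$ to conclude $2u'+v'=u$ and $u'=v$, hence $(u',v')=(v,u-2v)$; it closes with ``one can verify that this function satisfies the three cases.'' That coefficient-matching step is really a heuristic for \emph{discovering} $g$ rather than a deduction --- Bézout coefficients are not unique, so $(2u'+v')m+u'n=um+vn$ does not by itself force $2u'+v'=u$ and $u'=v$. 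Your direct substitution supplies precisely the verification the paper defers to the reader, and it is logically airtight as written: each of your three one-line computations collapses to the hypothesis $mu+nv=1$. Your preliminary observation about which Bézout pair to attach to each associated pair ($(v,u)$ for $(n,m)$ and $(u,-v)$ for $(m,-n)$) is also a useful clarification that the paper leaves implicit. In short, the paper's proof explains where $g$ comes from but is not fully rigorous on its own terms; yours proves the stated theorem cleanly but does not motivate the formula. The two are complementary, and either suffices once the ``one can verify'' step is actually carried out, as you have done.
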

\begin{proof}
Let $(m,n)$ be a Pythagorean pair with associated pairs $(n,m)$ and $(m,-n)$. Then by Theorem~\ref{thm:treenodes}, we have the Pythagorean pairs
\begin{enumerate}
\item $f(m,n) = (2m+n,m)$
\item $f(n,m) = (2n+m,n)$
\item $f(m,-n) = (2m-n,m)$
\end{enumerate}
Since $(m,n)$ is a Pythagorean pair, we know $\gcd{(m,n)} = 1$. So there exist $u,v \in \mathbb{Z}$ such that $mu + nv = 1$. We will consider each of three cases above separately.

\noindent \textbf{CASE 1:} Consider $f(m,n) = (2m+n,m)$. Since $(2m+n,m)$ is a Pythagorean pair, $\gcd{(2m+n,m)} = 1$. So there exist $u',v' \in \mathbb{Z}$ such that $(2m+n)u' + mv' = 1$. Then
$$(2m+n)u' + mv' = 2mu' + nu' + mv' = (2u'+v')m + u'n = 1.$$
Since $mu + nv = 1$, we get $2u'+v' = u$ and $u' = v$. Hence $u' = v$ and $v' = u-2v$.

\noindent \textbf{CASE 2:} Consider $f(n,m) = (2n+m,n)$. Since $(2n+m,n)$ is a Pythagorean pair, $\gcd{(2n+m,n)} = 1$. So there exist $u',v' \in \mathbb{Z}$ such that $(2n+m)u' + nv' = 1$. Then $$(2n+m)u' + nv' = 2nu' + mu' + nv' = u'm + (2u'+v')n = 1.$$
Since $mu + nv = 1$, we get $u'= u$ and $2u'+v'=v$. Hence $u' = u$ and $v' = v-2u$.

\noindent \textbf{CASE 3:} Consider $f(m,-n) = (2m-n,m)$. Since $(2m-n,m)$ is a Pythagorean pair, $\gcd{(2m-n,m)} = 1$. So there exists $u',v' \in \mathbb{Z}$ such that $(2m-n)u' + mv' = 1$. Then $$(2m-n)u' + mv' = 2mu' - nu' + mv' = (2u'+v')m + (-u')n = 1.$$
Since $mu + nv = 1$, we get $2u'+v' = u$ and $-u' = v$. Hence $u' = -v$ and $v' = u+2v$.

Define $g: \mathbb{Z} \times \mathbb{Z} \rightarrow \mathbb{Z} \times \mathbb{Z}$ by $g(u,v) = (v,u-2v)$. One can verify that this function satisfies the three cases above. 
\end{proof}

We call the trinary tree generated by $(u,v)$ for the corresponding Pythagorean pair $(m,n)$ the \textbf{B\'ezout tree of $\mathbf{(m,n)}$ generated by $\mathbf{(u,v)}$}. Notice that there is not a unique pair $(u,v)$ for each $(m,n)$; in fact, there are infinitely many such pairs. Thus there exist infinitely many B\'ezout trees of $(m,n)$, where each is dependent on the pair chosen to be the root of the tree.

\begin{example}\label{ex:trees}
The trinary tree generated by $(3,1)$ is given below on the left and the B\'ezout tree of $(3,1)$ generated by $(0,1)$ is given below on the right both to a depth of two:
\begin{figure}[h]
\centering
\begin{tikzpicture}
\node at (-5,0) (A){$(3,1)$};
\node at (-3,2) (B_1){$(5,3)$};
\node at (-3,0) (B_2){$(7,3)$};
\node at (-3,-2) (B_3){$(5,1)$};
\node at (-1,2.65) (C_1){$(7,5)$};
\node at (-1,2) (C_2){$(13,5)$};
\node at (-1,1.35) (C_3){$(11,3)$};
\node at (-1,.65) (C_4){$(11,7)$};
\node at (-1,0) (C_5){$(17,7)$};
\node at (-1,-.65) (C_6){$(13,3)$};
\node at (-1,-1.35) (C_7){$(9,5)$};
\node at (-1,-2) (C_8){$(11,5)$};
\node at (-1,-2.65) (C_9){$(7,1)$};
\draw (A) -- (B_1);
\draw (A) -- (B_2);
\draw (A) -- (B_3);
\draw (B_1) -- (C_1);
\draw (B_1) -- (C_2);
\draw (B_1) -- (C_3);
\draw (B_2) -- (C_4);
\draw (B_2) -- (C_5);
\draw (B_2) -- (C_6);
\draw (B_3) -- (C_7);
\draw (B_3) -- (C_8);
\draw (B_3) -- (C_9);
\node at (1,0) (X){$(0,1)$};
\node at (3,2) (Y_1){$(-1,2)$};
\node at (3,0) (Y_2){$(1,-2)$};
\node at (3,-2) (Y_3){$(0,1)$};
\node at (5,2.65) (Z_1){$(-2,3)$};
\node at (5,2) (Z_2){$(2,-5)$};
\node at (5,1.35) (Z_3){$(-1,4)$};
\node at (5,.65) (Z_4){$(2,-3)$};
\node at (5,0) (Z_5){$(-2,5)$};
\node at (5,-.65) (Z_6){$(1,-4)$};
\node at (5,-1.35) (Z_7){$(-1,2)$};
\node at (5,-2) (Z_8){$(1,-2)$};
\node at (5,-2.65) (Z_9){$(0,1)$};
\draw (X) -- (Y_1);
\draw (X) -- (Y_2);
\draw (X) -- (Y_3);
\draw (Y_1) -- (Z_1);
\draw (Y_1) -- (Z_2);
\draw (Y_1) -- (Z_3);
\draw (Y_2) -- (Z_4);
\draw (Y_2) -- (Z_5);
\draw (Y_2) -- (Z_6);
\draw (Y_3) -- (Z_7);
\draw (Y_3) -- (Z_8);
\draw (Y_3) -- (Z_9);
\end{tikzpicture}
\end{figure}
\end{example}

\section{Comparing the \textsc{Matlab} \texttt{gcd} function to trinary trees}

The \texttt{gcd} function in \textsc{Matlab} \cite{Matlab} has the following syntax:
$$\texttt{gcd}(A,B) = [G,U,V]$$ where $G$ is the greatest common divisor of $A$ and $B$, while $U$ and $V$ are the B\'ezout coefficients satisfying $A\cdot U+B\cdot V=G$. Even though there are infinitely many choices for $U$ and $V$, the \texttt{gcd} function will yield only one choice. This is done so using the extended Euclidean algorithm. We now consider the question of \textbf{when does the \texttt{gcd} function yield the same B\'ezout coefficients as the B\'ezout tree?}

Recall that the two trinary trees produced by $(2,1)$ and $(3,1)$ yield all the pairs of relatively prime integers. Hence the B\'ezout trees of $(2,1)$ and $(3,1)$ will produce one pair of B\'ezout coefficients for each pair of relatively prime integers. We have the freedom to choose any pair of B\'ezout coefficients which correspond to $(2,1)$ and $(3,1)$ to be the roots of our B\'ezout trees. In an attempt to match the B\'ezout coefficients produced by the \texttt{gcd} function to those in the B\'ezout trees, we select the following coefficients $(u_r,v_r)$ for our roots:
\begin{itemize}
\item The B\'ezout tree of $(2,1)$: $(u_r,v_r) = (0,1)$ since $\texttt{gcd}(2,1)=[1,0,1]$.
\item The B\'ezout tree of $(3,1)$: $(u_r,v_r) = (0,1)$ since $\texttt{gcd}(3,1)=[1,0,1]$.
\end{itemize}
Notice that the B\'ezout trees of $(2,1)$ and $(3,1)$ defined as above are identical. Recall that we gave this tree to a depth of $2$ in Example~\ref{ex:trees}.

We computed the trinary trees generated by $(2,1)$ and $(3,1)$ and their corresponding B\'ezout trees generated by $(0,1)$ to a depth of $13$. This yielded a total of $4782966$ relatively prime pairs and their corresponding B\'ezout coefficients. We then found the percentage of pairs $(u,v)$ in the B\'ezout trees which were equal to the pairs $(U,V)$ given by the \texttt{gcd} function. Our test showed that exactly one sixth of these pairs differed. Of the pairs which differed, all were generated in the same branch of the B\'ezout tree of $(2,1)$, specifically the branch corresponding to the relatively prime pair $(2,-1)$ found in the second level. In Figure~\ref{fig:BezoutTreeGCD} we show the B\'ezout tree of $(2,1)$ generated by $(0,1)$ with the differing pairs indicated; the pairs from the B\'ezout trees are denoted in blue and the pairs given by the \texttt{gcd} function are denoted in red. Pairs which are the same in both the tree and \texttt{gcd} function are given once in black.

\begin{figure}[h]
\centering
\begin{tikzpicture}
\node at (-3,0) (A){$(0,1)$};
\node[blue] at (0,2.5) (B_1){\footnotesize$(-1,2)$};
\node[red] at (0,2.9) (D_1){\footnotesize$(1,-1)$};
\node at (0,0) (B_2){$(1,-2)$};
\node at (0,-2) (B_3){$(0,1)$};
\node[blue] at (3,3.5) (C_1){\footnotesize$(-2,3)$};
\node[red] at (3,3.9) (D_1){\footnotesize$(1,-1)$};
\node[blue] at (3,2.5) (C_2){\footnotesize$(2,-5)$};
\node[red] at (3,2.9) (D_1){\footnotesize$(1,-1)$};
\node[blue] at (3,1.5) (C_3){\footnotesize$(-1,4)$};
\node[red] at (3,1.9) (D_1){\footnotesize$(1,-1)$};
\node at (3,.65) (C_4){$(2,-3)$};
\node at (3,0) (C_5){$(-2,5)$};
\node at (3,-.65) (C_6){$(1,-4)$};
\node at (3,-1.35) (C_7){$(-1,2)$};
\node at (3,-2) (C_8){$(1,-2)$};
\node at (3,-2.65) (C_9){$(0,1)$};
\draw (A) -- (-.7,2.6);
\draw (A) -- (B_2);
\draw (A) -- (B_3);
\draw (.7,2.85) -- (2.4,3.7);
\draw (.7,2.7) -- (2.4,2.7);
\draw (.7,2.55) -- (2.4,1.7);
\draw (B_2) -- (C_4);
\draw (B_2) -- (C_5);
\draw (B_2) -- (C_6);
\draw (B_3) -- (C_7);
\draw (B_3) -- (C_8);
\draw (B_3) -- (C_9);
\draw[olive,thick,dotted] (-.6,2.3) -- (.6,2.3) -- (.6,2.7) -- (-.6,2.7) -- cycle;
\end{tikzpicture}
\caption{B\'ezout tree of $(2,1)$ generated by $(0,1)$.}
\label{fig:BezoutTreeGCD}
\end{figure}
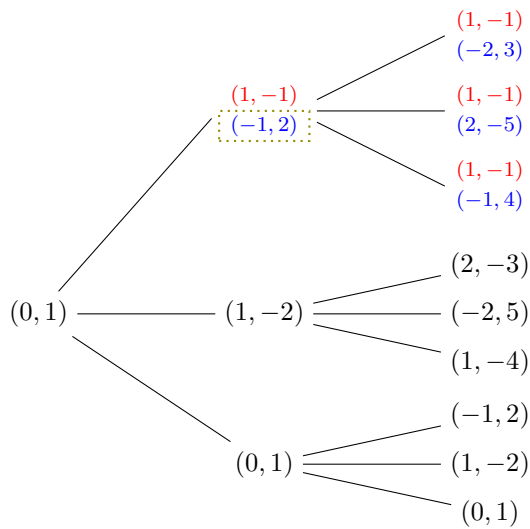

In our B\'ezout tree generated by $(0,1)$, if we change the pair $(-1,2)$ to the pair given by the \texttt{gcd} function, which is $(1,-1)$, then computation to a depth of $13$ shows that the B\'ezout tree will have the same exact pairs as the \texttt{gcd} function. This leads to the following conjecture, which we strongly believe to be true given our data.

\begin{conjecture}\label{conj:BezoutTreeGCD}
Consider the trinary trees generated by $(2,1)$ and $(3,1)$. Let $(u,v)$ be the pair in a B\'ezout tree corresponding to the relatively prime pair $(m,n)$ and $(U,V)$ be the pair given by the $\texttt{gcd}$ function for the same pair $(m,n)$. Then the following hold:
\begin{enumerate}
\item For all $(u,v)$ in the B\'ezout tree of $(3,1)$ generated by $(0,1)$, $(u,v) = (U,V)$. 
\item One third of the $(u,v)$ in the B\'ezout tree of $(2,1)$ generated by $(0,1)$ are not equal to $(U,V)$. Changing the value of $g(0,-1)$ in the second level of the B\'ezout tree from $(-1,2)$ to $(1,-1)$ results in a tree in which $(u,v) = (U,V)$ for all $(u,v)$.
\end{enumerate}
\end{conjecture}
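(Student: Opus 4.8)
The plan for proving Conjecture~\ref{conj:BezoutTreeGCD} is to deduce both parts from a single description of the Bézout representative chosen by the extended Euclidean algorithm together with a one-step ``compatibility'' statement for the map $g$. The first ingredient I would prove is \emph{Lemma~A}: for coprime integers $a>b\ge 1$, the triple $\texttt{gcd}(a,b)=[1,U,V]$ has $U$ equal to the \emph{unique} integer in the window $(-b/2,\,b/2]$ with $aU+bV=1$. This is a property of the algorithm itself, provable by strong induction on $b$: the base case $b\mid a$ (so $b=1$) gives $U=0$, and the inductive step uses $\gcd(a,b)=\gcd(b,\,a\bmod b)$ together with the back-substitution rule $(x,y)\mapsto(y,\,x-\lfloor a/b\rfloor y)$, which one checks carries the representative of Lemma~A for $(b,\,a\bmod b)$ to that for $(a,b)$. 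Every node of either trinary tree satisfies $m>n\ge 1$ (the root does, and $f$ preserves this), so Lemma~A applies verbatim at every node; I will call a Bézout pair for $(m,n)$ \emph{reduced} when it meets the condition of Lemma~A, so that ``reduced'' becomes synonymous with ``equal to $(U,V)$.''

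The second ingredient is a \emph{propagation lemma}: if $(u,v)$ is reduced for $(m,n)$, then $g(u,v)=(v,u-2v)$ and $g(v,u)=(u,v-2u)$ are reduced for $f(m,n)=(2m+n,m)$ and $f(n,m)=(2n+m,n)$, and $g(u,-v)=(-v,u+2v)$ is reduced for $f(m,-n)=(2m-n,m)$ \emph{except when} $(m,n)=(2,1)$. The verification is routine: from $mu+nv=1$ and $-n/2<u\le n/2$ one gets $v\in[\tfrac1n-\tfrac m2,\ \tfrac1n+\tfrac m2)$, and substituting into the required windows $(-m/2,m/2]$, $(-n/2,n/2]$, $(-m/2,m/2]$ settles the first two cases unconditionally; the third asks for $v\in[-m/2,m/2)$, which fails only when $v=m/2$, and a short case check forces that to $(m,n)=(2,1)$ --- where $v=1$, $g(0,-1)=(-1,2)$ has first coordinate $-1\notin(-1,1]$, and the reduced pair for $(3,2)$ is in fact $(1,-1)$.

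With these two lemmas both parts follow by induction on depth. For~(1): every node of the trinary tree of $(3,1)$ is a pair of odd integers, so $(2,1)$ never occurs; the root pair $(0,1)$ is reduced for $(3,1)$; hence by the propagation lemma every Bézout pair in this tree is reduced, i.e.\ equals $(U,V)$. For~(2): in the trinary tree of $(2,1)$ the pair $(2,1)$ appears exactly once, as the root, so the propagation lemma fails in exactly one place --- the $f(m,-n)$-child $(3,2)$ of the root, where the tree records $(-1,2)$ in place of the reduced $(1,-1)$. Since any two Bézout pairs for the same $(m,n)$ differ by an integer multiple of $(n,-m)$, and a direct computation shows each of $g(u,v),g(v,u),g(u,-v)$ sends ``differs by $t(n,-m)$'' to ``differs by $(\pm t)(n',-m')$'' for the relevant child, and $(-1,2)$ and $(1,-1)$ differ by $1\cdot(2,-3)$, every node in the principal subtree rooted at $(3,2)$ carries a tree pair differing from the reduced one by $\pm 1$ times the child's coset generator --- hence never equal to $(U,V)$ --- while every node outside that subtree is reduced. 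So the disagreements are \emph{exactly} the branch below $(3,2)$, one of the three branches descending from the root, i.e.\ ``one third'' of the tree; combined with perfect agreement throughout the tree of $(3,1)$ this gives the $1/6$ observed experimentally. Finally, overwriting $g(0,-1)$ by $(1,-1)$ makes $(3,2)$ reduced, and $(2,1)$ is not a descendant of $(3,2)$, so the propagation lemma now holds at every node of that subtree; the patched tree therefore has every Bézout pair reduced, i.e.\ equal to $(U,V)$.

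The main obstacle is Lemma~A: one must nail down precisely which Bézout representative \textsc{Matlab}'s \texttt{gcd} returns --- the asymmetric window $(-b/2,b/2]$ whose right endpoint is genuinely attained (as in $\texttt{gcd}(3,2)=[1,1,-1]$), with the degenerate steps $b\mid a$ handled correctly --- and this is exactly the place where a standard-but-delicate fact about the extended Euclidean algorithm (its coefficients satisfy $|U|\le b/2$ and $|V|\le a/2$) has to be invoked and the sign convention pinned down. If the true convention were even slightly different, the windows in the propagation lemma would shift and the exceptional node would have to be re-identified, so everything hinges on getting Lemma~A exactly right; by contrast the propagation lemma and the two inductions are elementary once the windows are fixed. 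One small point worth stating cleanly is that ``one third'' and ``one sixth'' are exact once the root is excluded from the count, as in the experiment, since the branch below $(3,2)$ then fills exactly one of the three equal subtrees at each depth.
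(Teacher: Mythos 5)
The paper does not prove this statement: it is stated as a conjecture, supported only by computing both trees to depth $13$ ($4782966$ pairs) and observing that exactly one sixth disagree with \textsc{Matlab}. So your proposal is not an alternative to the paper's argument --- there is none to compare against --- but a genuine proof strategy, and as far as I can check it is sound. The decisive idea the paper lacks is to characterize the \texttt{gcd} output intrinsically (your Lemma~A: $U$ is the unique solution in the window $(-b/2,\,b/2]$) and then show $g$ preserves that normalization. Your window arithmetic checks out: $mu+nv=1$ with $u\in(-n/2,\,n/2]$ gives $v\in[\tfrac1n-\tfrac m2,\ \tfrac1n+\tfrac m2)$, which forces $v\le m/2$ always and $v<m/2$ unless $m(2u+n)=2$, i.e.\ $(m,n)=(2,1)$; since first coordinates strictly increase down the tree, $(2,1)$ occurs only at the root, so propagation fails at exactly one node, the child $(3,2)$. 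The coset argument (tree pair minus reduced pair equals $t(n,-m)$, and each branch of $g$ sends $t$ to $\pm t$ for the child's generator) is exactly what is needed to conclude that the \emph{entire} subtree below $(3,2)$ disagrees --- giving ``exactly one third'' rather than ``at most one third'' --- and that the single patch $(-1,2)\mapsto(1,-1)$ repairs the whole subtree. Three points to nail down in a full write-up: (i) Lemma~A is ultimately a claim about \textsc{Matlab}'s implementation; for the textbook extended Euclidean algorithm it is provable (the penultimate continued-fraction denominator satisfies $q_{k-1}\le q_k/2$ because the last partial quotient is at least $2$, with equality, hence the attained endpoint $U=b/2$, only when $b=2$ --- precisely the source of the exceptional node), but that \textsc{Matlab} returns these coefficients must be taken from the documentation or the paper's own assertion; (ii) ``one third'' and ``one sixth'' are exact only with the roots excluded, consistent with the paper's count $4782966=3^{14}-3$; (iii) a harmless sign slip: $(-1,2)-(1,-1)=-(2,-3)$, not $+(2,-3)$, which does not affect the argument since only $t\neq 0$ matters.
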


\section{Application to mathematical art}

Our purpose for exploring trinary trees of relatively prime pairs was to use them to create hyperbolic wallpaper in a computationally inexpensive manner. These hyperbolic wallpapers are plotted in the upper-half plane and are created using a domain coloring algorithm. We start with a color map and a blank image. Then we use a mapping function $f(z)$ to color the pixels in the blank image, where $f(z)$ maps pixel locations from the blank image to pixel locations in the color map. This mapping function $f(z)$ is given by $$f(z) = \sum_{\gcd(c,d)=1}f(\gamma_{c,d}(z))$$ where $\gamma_{c,d}(z) = \frac{az+b}{cz+d}$ and $ad-bc = 1$. For our purposes, this sum will be finite yet have hundreds of terms. Notice that we need relatively prime pairs $(c,d)$ in our computation. The restriction $ad-bc = 1$ requires us to have the B\'ezout coefficients $(a,b)$ as well. Since the \texttt{gcd} function in \textsc{Matlab} is computationally expensive, we instead generate the trinary trees and B\'zout trees in a matter of seconds and pull values from these. More information on these wallpapers will be given in \cite{Handbook} upon publication.


\bibliographystyle{amsplain}

\end{document}